\newcommand{\Ze}{\mathop{\mathrm{Z}}}
\newcommand{\ze}{\mathop{\zeta}}
\newcommand{\sq}[1]{\mathbf{#1}}
\newcommand{\M}[1]{\mathcal{M}_{0, #1}}
\newcommand{\Md}[1]{\mathcal{M}^\delta_{0, #1}}
\newcommand{\dlog}{\mathop{d\, log}}
\newcommand\overmat[2]{% http://tex.stackexchange.com/a/102468/5764
  \makebox[0pt][l]{$\smash{\overbrace{\phantom{%
    \begin{matrix}#2\end{matrix}}}^{\text{$#1$}}}$}#2}
\newcommand\undermat[2]{% http://tex.stackexchange.com/a/102468/5764
  \makebox[0pt][l]{$\smash{\underbrace{\phantom{\begin{matrix}#2\end{matrix}}}_{\text{$#1$}}}$}#2}
\theoremstyle{plain}
\newtheorem{prop}{Proposition}
\newtheorem{theorem}{Theorem}
\newtheorem*{cor}{Corollary}
\theoremstyle{definition}
\newtheorem{definition}{Definition}
\newtheorem{example}{Example}
\newenvironment{thmbis}[1]
  {%
   \addtocounter{theorem}{-1}%
   \begin{theorem}}
  {\end{theorem}}
\g@addto@macro\th@remark{\thm@headpunct{}}
\theoremstyle{remark}
\newtheorem{proc}{Procedure}
\begin{document}

\title{On algebra of big  zeta values}

\begin{abstract}
The algebra of big zeta values  we introduce in this paper
is an intermediate object between multiple zeta values and  periods of the multiple zeta motive.
It consists of number series generalizing multiple zeta values, 
the simplest examples, which are not multiple zeta series, are Tornheim sums.
We show that  convergent big zeta values are periods of the moduli space of stable curves
of genus zero on one hand and multiple zeta values on the other hand.
It gives an alternative way to prove that
any such period may be expressed as a rational linear combination of multiple zeta values
and a simple algorithm for finding such an expression.
\end{abstract}

\author{Nikita Markarian}

\email{nikita.markarian@gmail.com}

\date{}

\thanks{The study has been funded within the framework of the HSE University Basic Research Program and the Russian Academic Excellence Project '5-100'.
}

\address{National Research University Higher School of Economics, Russian Federation,
Department of Mathematics, 20 Myasnitskaya str., 101000, Moscow,
Russia}

\maketitle

\section*{Introduction}
Multiple zeta values are number series
playing important role in a wide range of subjects,  see e.~g. \cite{B, GM, IKZ}.
Although the arithmetic nature of these numbers is still unknown,
one may try to consider all consequences of natural relations among
these series, that is the algebra of formal multiple zeta values. 
One could expect that there no other rational relations
among them, that is the formal algebra of multiple zeta values
is isomorphic to the algebra rationally generated by multiple zeta values.
The long-standing conjecture states that all relations
for the both of them are regularized double shuffle relations (\cite[Conjecture 1]{IKZ}).

In the first section, we introduce an algebra of number series called big zeta values, which generalizes
multiple zeta values. One may consider the corresponding formal algebra.
The main theorem of the paper states that relations among big zeta values imply that any big zeta value
equals a rational linear combination of multiple zeta values. Thus, the algebra of (formal)
big zeta values is another hopefully more convenient form
of the (formal) algebra of multiple zeta values.

In the second section, we describe in some detail relations among big zeta values.
An interesting purely algebraic problem is to find all
relations among formal multiple zeta values inside the formal algebra of big zeta values.
Stuffle relations obviously follow from these relations.
It is harder but possible to prove shuffle relations in terms of number series,
 see  \cite{KMT}.
One may expect that there are no other relations except regularized double shuffle
relations as it is suggested by the conjecture mentioned above.

The third section is devoted to the proof of the main theorem
of the paper, which states that every big zeta value
is a rational linear combination of multiple zeta values. Surprisingly enough,
the proof does not use all relations among big zeta values.
It does not use the invariance
of a big zeta value under a permutation of columns of the matrix defining the value.
Generally,  a basic matrix may become not basic after
such a permutation, except in the case of reflection. The invariance
of a big zeta value under reflection is analogous to the duality theorem (see e.~g. \cite{Dua})
for multiple zeta values. 

To prove the main theorem we build an algorithm
which allows us to turn any big zeta value 
into a rational linear combination of multiple zeta values. 
The algorithm is of independent interest. It may be considered
as a generalization of the stuffle product for the following reason.
The product of two multiple zeta  series is not a multiple zeta  series,
but a big zeta  series corresponding to the direct sum
of matrices of these multiple zeta values.
Applying the algorithm to it
we may expand it back in multiple zeta values. One may see that
the result is the stuffle product. 

In the last section,
we connect our construction with integrals  
of regular differential
forms with logarithmic singularities at
infinity on the moduli space of stable curves of genus 0 by the standard simplex
in cubical coordinates.
This construction motivated our definition of the algebra of big zeta values.
It turns out that term by term integration of the Taylor series
of such an integral is a rational linear combination of
big zeta series, and conversely, every big zeta series
is a rational linear combination of integrals of such series.

Different choices of coordinates on the integration region
result in different number series representing the same integral.
It gives a bunch of relations among the big zeta series generalizing the duality
relation mentioned above. It would be interesting to derive them
directly from basic relations.

The formal algebra of big zeta values  reminds the formal algebra of periods from  \cite{BCS, M} but is
more manageable. Relations in it mimic relations in the algebra of periods:
 Orlik--Solomon relations (\ref{osr}) correspond to the Stokes theorem 
and  harmonic product relations (\ref{hpr}) correspond to  Arnold relations. 
It would be interesting to find an analogous algorithm, which using integral relations expresses any integral as above as a rational combination of iterated integrals.
It would prove or disprove conjectures from \cite{BCS, M}.

Being proved that all relations among integrals as above
follows from basic relations among big zeta number series,
one may define purely algebraically the "integral" of regular differential
forms with logarithmic singularities at
infinity on the moduli space of stable curves of genus 0 by the standard simplex,
which takes values in the formal algebra of big zeta values.
This "integral" would have all properties of the usual integral
such as the Stokes theorem and the Fubini theorem.
It would follow that the  Drinfeld associator with coefficients in the formal algebra of  big zeta values defined by this "integral"
obey the standard associator relations.
Thus, the set of relations among formal multiple zeta values inside formal big zeta values algebra, which are conjecturally regularized double shuffle relations,
would imply the Drinfeld associator relations. This is the subject of future research.

{\bf Acknowledgments.} I am grateful to H.~Tsumura and W.~Zudilin for helpful discussions.

\section{Algebra of big  zeta values}

Denote by $e_{ab}$ a positive $A_w$-root,
that is an element $(r_i)\in\mathbb{Z}^w$
defined by
\begin{equation}
r_{i}=
\begin{cases} 1 & \mbox{if}\quad a\le i\le b \\
0 & \mbox{otherwise} \end{cases}
\label{root}
\end{equation}

\begin{definition}
A $d\times w$-matrix $A=(a_{ij})$ is called {\em basic} if it is of rank $d$, has no zero columns and
all its rows are positive $A_w$-roots  (\ref{root}). 
\label{def}
\end{definition}
In other words, rows of a basic matrix are coordinates  of positive $A_w$-roots,
which are linearly independent and are not contained in any coordinate hyperplane.

In the following definition the term "formal" means that we do not care
about convergence of the number series.

\begin{definition}
For a basic matrix $A$ the  {\em big zeta  series} is a formal series
\begin{equation}
\Ze(A)=\sum_{\substack{n_i\in \mathbb{N}, \\ 1\le i\le d}}\frac{1}{\prod_j(\sum_i a_{ij} n_i)}
\label{bzv}
\end{equation}
The width $w$ of $A$ is called the {\em weight}  and its height 
$d$ is called the {\em depth} of the big zeta series.
\end{definition}

Note that the big  zeta series does not depend on the order
of rows of the matrix.
\begin{example}
\label{e1}
$$
A=
  \begin{array}{@{} c @{}}
    \left (
      \begin{array}{ *{11}{c} }
        \overmat{k_d}{1 & \dots & 1} & \overmat{k_{d-1}}{1 & \dots & 1} & \dots & \dots&\overmat{k_1}{1 &\dots &1 } \\
	0 & \dots & 0 & 1 &\dots & 1 & \dots & \dots & \dots&  \dots & 1 \\
	0 & \dots & 0 & 0 &\dots & 0 & 1 & \dots & \dots&  \dots & 1 \\
        \vdots & \vdots & \vdots & \vdots &\vdots & \vdots& \vdots & \vdots & \vdots &  \vdots & \vdots \\
        0 & \dots & 0 & 0 &\dots & 0 & \dots & 0 & 1 &  \dots & 1 \\
      \end{array}
    \right )\\
    \mathstrut
  \end{array}
$$
\medskip

\begin{equation}
\Ze(A)=\sum_{\substack{n_i\in \mathbb{N}, \\ 1\le i\le d}}\frac{1}{ {n_1}^{k_d}(n_1+n_2)^{k_{d-1}}\cdots(n_1+\dots+n_d)^{k_1}}
=\ze(\sq{k_1, \dots, k_d})
\label{mzv}
\end{equation}

This is the multiple zeta series, see e.~g. \cite{IKZ}.
\end{example}

\begin{example}
\bigskip
\medskip
$$A=
  \begin{array}{@{} c @{}}
    \left (
      \begin{array}{ *{9}{c} }
		 \overmat{a+b}{1 & 1 & \dots & 1& \dots& 1} & 0 & \dots  & 0\\
         0& \dots & 0 & \undermat{b+c}{1 & \dots&1 & \dots &1&1}
      \end{array}
    \right )\\
    \mathstrut
  \end{array}
$$
\medskip
\begin{equation}
\Ze(A)=\sum_{n,m \in \mathbb{N}}\frac{1}{n^am^c(n+m)^b}
\end{equation}

This is the Tornheim sum, see e.~g. \cite{BZ}.
\end{example}

Space of formal big zeta series is equipped with a product
given by the direct sum of matrices.
Define the {\em formal big zeta values algebra}
as the space rationally generated 
by formal big zeta  series with this product modulo natural relations.
These relations are Orlik--Solomon relations (\ref{osr})
and harmonic product relations (\ref{hpr}) below
plus the invariance of the big zeta series under permutations
of rows and columns of the defining matrix.

Example \ref{e1} above shows that formal multiple zeta values lie
in the formal big zeta values algebra. The following theorem
states that they generate the whole algebra as a vector space.

\begin{theorem}
Any formal big zeta value  is a rational linear combination of formal multiple zeta values of the same weight.
\label{main}
\end{theorem}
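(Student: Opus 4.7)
The plan is to prove the theorem by constructing an explicit reduction algorithm. First, I characterize multiple zeta matrices as those basic matrices whose row intervals, after row permutation, form a nested chain $[c_1,w]\supsetneq[c_2,w]\supsetneq\cdots\supsetneq[c_d,w]$ all ending at the rightmost column, as in Example \ref{e1}. I then introduce a complexity measure $c(A)$ on basic matrices that vanishes exactly on multiple zeta matrices; a concrete candidate is $c(A)=\sum_i(w-b_i)$, where $b_i$ is the right endpoint of the interval supporting row $i$. The strategy is to show that for any basic matrix $A$ with $c(A)>0$ one can rewrite $\Ze(A)$ as a rational linear combination of $\Ze(A')$ with the same weight $w$ but strictly smaller complexity, and iterate.

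To carry out the reduction step, pick a row $i$ with $b_i<w$. Since $A$ has no zero columns, some other row $i'$ covers column $b_i+1$, so at the interface between columns $b_i$ and $b_i+1$ the summand of $\Ze(A)$ contains a pair of adjacent linear-form factors $\tfrac{1}{L\cdot L'}$ whose difference $L-L'$ is a positive combination of the row variables, including $n_i$. Apply an Orlik--Solomon partial-fraction identity (\ref{osr}) of the form $\tfrac{1}{L\cdot L'}=\tfrac{1}{L-L'}\bigl(\tfrac{1}{L'}-\tfrac{1}{L}\bigr)$ and rearrange so that the resulting summand corresponds to new basic matrices in which the right endpoint of row $i$ has moved one step closer to $w$. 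Summing termwise over all variables one obtains $\Ze(A)$ as a sum of $\Ze(A^{(k)})$ with $c(A^{(k)})<c(A)$. I would first verify this mechanism on the Tornheim example, where iterating the identity on the factor $(n+m)$ unwinds the sum into standard multiple zeta values of the same weight.

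Whenever the Orlik--Solomon step produces a degenerate configuration --- two rows coalescing, a column emptying, or the resulting matrix failing the rank condition of Definition \ref{def} --- I invoke the harmonic product relation (\ref{hpr}) to split the offending sum by the three cases $n_i>n_{i'}$, $n_i<n_{i'}$, $n_i=n_{i'}$, recovering a linear combination of genuine basic matrices of strictly lower complexity. The product-of-MZVs direct-sum matrix mentioned in the introduction is the extreme case in which the harmonic product does all the work and produces the classical stuffle decomposition; more general matrices require interleaving Orlik--Solomon and harmonic product steps, with row permutations used freely to keep the description of $A$ clean.

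By induction on $c(A)$ the algorithm terminates in a rational linear combination of multiple zeta matrices, all of weight $w$, which is exactly the claim. The main technical obstacle is the combinatorial bookkeeping: one must choose the complexity measure $c$ and the order in which right endpoints are pushed rightward so that every Orlik--Solomon step strictly decreases $c$ and every degeneracy produced along the way is exactly one handled by (\ref{hpr}). Getting this choice right, while ensuring that no appeal to the column-permutation invariance --- which the introduction warns is not used --- slips into the argument, is where the real work of the proof will lie.
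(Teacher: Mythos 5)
Your proposal assembles the right ingredients --- Orlik--Solomon relations, harmonic product relations, and an induction on a complexity measure that vanishes exactly on multiple zeta matrices --- but the crucial reduction step is asserted rather than proved, and as sketched it breaks down. The identity $\tfrac{1}{LL'}=\tfrac{1}{L-L'}\bigl(\tfrac{1}{L'}-\tfrac{1}{L}\bigr)$ is only usable when $L-L'$ is itself a nonnegative linear form supported on the rows; at the interface between columns $b_i$ and $b_i+1$ this fails whenever some row \emph{starts} at column $b_i+1$, since then $L-L'$ has a negative coefficient and $\tfrac{1}{L-L'}$ has poles on the summation lattice. Even when $L-L'$ is positive, the matrices produced by the partial fraction generically have rows that are no longer positive roots (contiguous blocks of units), so they are not basic and your induction on $c(A)=\sum_i(w-b_i)$ cannot continue; the harmonic product relation (\ref{hpr}) as stated only splits two \emph{adjacent interval} rows $e_{ij}$, $e_{(j+1)k}$ and does not repair a row with a gap. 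You acknowledge that choosing the measure and the order of moves so that every step strictly decreases $c$ "is where the real work of the proof will lie" --- that work is precisely the content of the theorem, and it is absent.

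For comparison, the paper's proof avoids these obstructions by a different organization. It first invokes Proposition \ref{square} (a Brion--Vergne decomposition) to reduce everything to \emph{square} $d\times d$ basic matrices with the excess weight absorbed into differential operators, and it inducts on the \emph{depth} rather than on a right-endpoint defect. Procedure 1 then uses only harmonic product relations to reach upper triangular matrices (this is where your "interface" problem is resolved once and for all: in an upper triangular matrix the relevant column differences are automatically positive). Procedure 2 then performs the Orlik--Solomon step in a protected form: it \emph{adds} an auxiliary column at a chosen position, observes a linear dependence among columns of the enlarged matrix, and uses (\ref{osr}) to trade the added column for one of the others --- keeping the weight fixed --- after which a single harmonic product step restores basicness and provably increases lexicographic agreement with the target matrix $T_d$. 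If you want to salvage your plan, you should at minimum prepend the reduction to square upper triangular matrices and replace your endpoint-sum measure with one adapted to that normal form; as it stands the termination argument and the well-definedness of the intermediate objects are both missing.
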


The proof of Theorem \ref{main} occupies Section \ref{proof} below.

\begin{cor}
The sum of a convergent big zeta series is a linear rational combination
of multiple zeta values of the same weight.
\end{cor}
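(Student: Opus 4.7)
The plan is to deduce the corollary from Theorem \ref{main} by transporting the formal identity it supplies to an identity of actual numbers. First I would record that each of the defining relations of the formal algebra -- Orlik--Solomon (\ref{osr}), harmonic product (\ref{hpr}), and invariance under permutations of rows and columns -- is a genuine numerical identity whenever every big zeta series appearing in it converges absolutely. Fubini yields the permutation invariances, partitioning $\mathbb{N}^d\times\mathbb{N}^{d'}$ by coordinate comparisons gives the harmonic product, and Orlik--Solomon follows from termwise partial fraction decompositions in absolutely summable families. In particular, if the algorithm of Section \ref{proof} applied to the convergent $\Ze(A)$ happened to stay within convergent series at every step, the corollary would follow at once.

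The main obstacle is that the reduction algorithm may pass through intermediate big zeta matrices whose numerical sums diverge, and may terminate at formal multiple zeta values some of which are divergent. To circumvent this I would introduce a regularization: for small real $s>0$, multiply each summand in (\ref{bzv}) by $(n_1\cdots n_d)^{-s}$ to produce an absolutely convergent series $Z_s(A)$ for every basic $A$, analytic in $s$ and reducing to $\Ze(A)$ at $s=0$ whenever the latter converges. A direct check shows that the three families of basic relations continue to hold verbatim for $Z_s$, since the extra factor $(n_1\cdots n_d)^{-s}$ is insensitive to the partial fraction manipulations on the main denominators and combines compatibly with the harmonic product splitting.

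Running the algorithm of Section \ref{proof} uniformly in $s$ therefore yields, for each $s>0$, an honest numerical identity expressing $Z_s(A)$ as a rational combination of regularized multiple zeta sums of the same weight. Letting $s\to 0^+$, the left-hand side tends to $\Ze(A)$ by monotone convergence, while on the right, the limit exists because the left limit does, and it produces a rational combination of convergent multiple zeta values of the correct weight, with the divergences of the individual summands cancelling pairwise via the standard shuffle/stuffle regularization identities. Making this cancellation explicit is the step where I expect the bulk of the technical care to be concentrated; weight preservation is inherited directly from Theorem \ref{main}.
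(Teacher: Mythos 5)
The paper offers no separate argument for this corollary: it is read off directly from Theorem \ref{main}, because every relation the reduction algorithm uses --- Orlik--Solomon (\ref{osr}) and the harmonic product (\ref{hpr}), both with differential operators --- is established in the paper as an identity of the number series themselves (a pointwise partial-fraction identity in the first case, a decomposition of the summation domain of a positive, hence absolutely convergent, series in the second). So for a convergent $\Ze(A)$ the formal computation of Section \ref{proof} is simply a chain of numerical identities; no regularization appears anywhere. Your instinct that one should worry about divergent intermediate or terminal series is not unreasonable, but the machinery you build to address it does not work as stated.

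The concrete gap is your claim that the basic relations ``continue to hold verbatim'' for $Z_s(A)=\sum (n_1\cdots n_d)^{-s}/\prod_j(\sum_i a_{ij}n_i)$. This is false for the harmonic product relation. Proposition \ref{hp} is proved by reindexing: the block $n>m$ of $\Ze(A)$ is identified with $\Ze([e_{ik};e_{ij};A'],m_1^*D)$ via $n=p+q$, $m=p$, where $p,q$ are the new summation parameters. Under this substitution your weight $(nm)^{-s}$ becomes $\bigl((p+q)p\bigr)^{-s}$, which is not the weight $(pq)^{-s}$ defining $Z_s$ of the new matrix. So the identity you propose to propagate through Procedures 1 and 2 is simply not available for $Z_s$, and the whole uniform-in-$s$ run of the algorithm collapses at the first application of (\ref{hpr}). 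In addition, the final step --- that the individual divergences on the right cancel as $s\to0^+$ ``via the standard shuffle/stuffle regularization identities'' --- is precisely the assertion that would need proof here, and it is only named, not argued; note also that the algorithm terminates in series $\Ze(T_d,D)$ with differential operators, so one must in any case analyze which of these are convergent rather than appeal to MZV regularization folklore.
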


The following proposition gives a convergence criterion of
a big zeta series. It also follows from results of Section \ref{integral}
below.

\begin{prop}
For a basic matrix $A$  the big zeta series $\Ze(A)$
converges iff $A$ does not contain any rows with only one unit.
In other words, it converges iff the corresponding set of positive 
$A_w$-roots contains no simple roots.
\label{conv}
\end{prop}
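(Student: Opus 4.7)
Plan: The ``only if'' implication is easy. Suppose some row $i_0$ is a simple root $e_{j_0 j_0}$, so $a_{i_0 j} = \delta_{j j_0}$. Then $n_{i_0}$ enters exactly one denominator factor of \eqref{bzv}, namely $L_{j_0}(n) = n_{i_0} + c$ with $c := \sum_{i \ne i_0} a_{i j_0} n_i$ independent of $n_{i_0}$. For fixed values of $(n_i)_{i \ne i_0}$ the inner sum $\sum_{n_{i_0} \ge 1}(n_{i_0} + c)^{-1}$ diverges as a harmonic series; since all terms of $\Ze(A)$ are positive, Tonelli lets us iterate the summation, so divergence of the inner sum forces $\Ze(A) = +\infty$.

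For the ``if'' direction I would use the integral representation from Section~\ref{integral}. Applying $1/L = \int_0^1 t^{L-1}\,dt$ to each factor and swapping sum and integral (legitimate because everything is positive) yields
\[
\Ze(A) = \int_{[0,1]^w} \prod_j \frac{dt_j}{t_j} \prod_i \frac{u_i}{1 - u_i}, \qquad u_i = \prod_j t_j^{a_{ij}},
\]
so convergence of the series is equivalent to finiteness of this positive integral. Near the face $\{t_j = 0\}$ the pole $1/t_j$ is cancelled by the vanishing of $u_i$ for every $i$ with $a_{ij} = 1$, of which there is at least one because $A$ has no zero columns; hence faces of the form $t_j = 0$ are harmless. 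The genuine singularities live on the opposite faces $\{t_j = 1 : j \in J\}$, where each factor $1/(1-u_i)$ whose row-support is contained in $J$ acquires a pole.

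The main obstacle is the local analysis at those strata. After writing $\epsilon_j = 1 - t_j$, the integrand becomes, to leading order, $\prod_{i : \mathrm{supp}(i) \subseteq J}\bigl(\sum_{j \in \mathrm{supp}(i)} \epsilon_j\bigr)^{-1}$ times bounded factors, and a scaling argument reduces integrability to comparing $|J|$ with the number of rows supported in~$J$. The codimension-one case $|J| = 1$ becomes exactly the ``no simple root'' hypothesis, since on the face $\{t_j = 1\}$ only rows equal to $e_{jj}$ contribute a pole. Deeper strata are handled by passing to local charts coming from the normal-crossing compactification $\overline{\mathcal{M}}_{0, w+3}$ described in Section~\ref{integral}, in which each boundary divisor reproduces the codimension-one picture; patching the local integrability statements across these charts gives finiteness of the whole integral, hence convergence of $\Ze(A)$.
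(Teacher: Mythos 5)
Your ``only if'' argument is the same as the paper's and is fine. For the ``if'' direction you take the integral route (which the paper itself flags as an alternative via Section~\ref{integral}), whereas the paper dominates $\Ze(A)$ by $\sum 1/\prod_j\max_i a_{ij}n_i$ and splits over the chambers $n_{\sigma(1)}>\dots>n_{\sigma(d)}$. But your argument has a genuine gap exactly where you wave at the ``deeper strata'': by your own scaling count, local integrability at the stratum $\{t_j=1:\ j\in J\}$ requires $\#\{i:\mathrm{supp}(i)\subseteq J\}<|J|$, and the no-simple-root hypothesis only delivers this for $|J|=1$. Passing to a normal-crossing compactification does not reduce the deeper conditions to the codimension-one one; each boundary divisor (including the exceptional divisors over deep strata) imposes its own inequality, and these do not follow from the $|J|=1$ case.

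Moreover, the gap cannot be closed, because the statement as given appears to be false. Take $w=d=3$ and rows $e_{12}=(1,1,0)$, $e_{23}=(0,1,1)$, $e_{13}=(1,1,1)$: this matrix is basic (determinant $1$, no zero columns) and contains no simple root, yet
\begin{equation*}
\Ze(A)=\sum_{n_1,n_2,n_3\ge1}\frac{1}{(n_1+n_3)\,(n_1+n_2+n_3)\,(n_2+n_3)}
\end{equation*}
diverges: on the block $1\le n_3\le n_1$, $n_1\le n_2\le 2n_1$ each of the three linear forms is at most $4n_1$, so the summand is at least $(4n_1)^{-3}$, there are at least $n_1^2$ lattice points in the block, and the blocks for distinct $n_1$ are disjoint, so $\Ze(A)\ge\sum_{n_1}1/(64\,n_1)=+\infty$. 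In your picture this is precisely the failure of the stratum condition at $J=\{1,2,3\}$: three rows supported in a three-element set produce a logarithmic divergence at $t_1=t_2=t_3=1$. (The paper's own proof of the ``if'' direction has the same blind spot: on a chamber the dominating sum is $\sum_{m_1>\dots>m_d}\prod_i m_i^{-k_i}$ with $k_1\ge2$ but possibly $k_i=0$ for some $i\ge2$, and such a series need not converge.) A correct criterion has to constrain every interval of columns, not just single columns.
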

\begin{proof}
Let $A$ contains a row with only one unit.
Consider the subseries, where  all summation parameters are fixed
except the one corresponding to this row.
This subseries is proportional to the harmonic series without some initial interval. 
Thus the series diverges.

The big zeta series (\ref{bzv}) is dominated by the series
$$
\sum_{\substack{n_i\in \mathbb{N}, \\ 1\le i\le d}}\frac{1}{\prod_j(\max_i a_{ij} n_i)}
$$
Divide the summation region in components corresponding to the diagonal
stratification of $\mathbb{N}^d$, that is to total orders on the set of rows.
 At each stratum this series
is equal to some multiple zeta series (\ref{mzv}), and $k_1$ of all these
series is more than the minimal number of units in rows of $A$.
If every row of $A$ contains at least two units, then $k_1$
of all these multiple zeta values are more than 1. All such multiple 
zeta series are known to be convergent, see e.~g. \cite{IKZ}.
It follows that the big zeta  series converges too.
\end{proof}

\section{Relations}

For  a basic matrix $A$  and a differential operator with constant coefficients
$D\in \mathbb{Q}[\partial/\partial z_1, \dots , \partial/\partial z_d]$ 
introduce notation:
\begin{equation}
\Ze(A, D)=\sum_{\substack{n_i\in \mathbb{N}, \\ 1\le i\le d}}\left(D\frac{1}{\prod_j(\sum_i a_{ij} z_i)}\right)(n_1, \dots, n_d)
\label{diff}
\end{equation}

\begin{prop}
For a basic $d\times w$-matrix $A$  and a homogeneous differential operator  with constant coefficients $D$, series $\Ze(A, D)$
is a rational linear combination of series $\Ze(A')$ for some $A'$s   of depth $d$ and of weight $w+\deg D$.
\end{prop}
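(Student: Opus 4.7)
The plan is to apply $D$ directly to the generic summand $f(z_1,\ldots,z_d) = \prod_j(\sum_i a_{ij} z_i)^{-1}$ of $\Ze(A)$, expand $Df$ as a $\mathbb{Q}$-linear combination of rational functions of the same product form, and then recognize each such function as the generic summand of a big zeta series of the required shape.

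First, I would show by induction on $\deg D$ that $Df$ is a $\mathbb{Q}$-linear combination of rational functions
$$g_{\mathbf e}(z) = \prod_j \Bigl(\sum_i a_{ij} z_i\Bigr)^{-e_j}$$
indexed by tuples $\mathbf e = (e_1,\ldots,e_w)$ of positive integers with $\sum_j e_j = w+\deg D$. The inductive step is immediate: $\partial/\partial z_k$ applied to $\prod_j(\sum_i a_{ij} z_i)^{-e_j}$ equals $\sum_j (-e_j a_{kj})\prod_{j'}(\sum_i a_{ij'} z_i)^{-e_{j'}-\delta_{jj'}}$, which raises one exponent by $1$ while preserving the product form. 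Homogeneity of $D$ then ensures $\sum e_j - w = \deg D$ uniformly, and the rationality of the scalar coefficients of $D$ guarantees $\mathbb{Q}$-linearity. Evaluating $g_{\mathbf e}$ at $(n_1,\ldots,n_d)\in\mathbb{N}^d$ reproduces exactly the summand of $\Ze(A_{\mathbf e})$, where $A_{\mathbf e}$ denotes the $d\times(w+\deg D)$ matrix obtained from $A$ by replacing the $j$-th column with $e_j$ copies of itself; term-by-term summation then yields the desired identity $\Ze(A,D) = \sum_{\mathbf e} c_{\mathbf e}\,\Ze(A_{\mathbf e})$ as formal series with $c_{\mathbf e}\in\mathbb{Q}$.

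The only nontrivial point to check is that each $A_{\mathbf e}$ is itself basic. The rank stays equal to $d$ and no column becomes zero, since $A_{\mathbf e}$ already contains all columns of the basic matrix $A$. The slightly delicate step is that each row of $A_{\mathbf e}$ must remain a positive $A_{w+\deg D}$-root, i.e.\ a consecutive block of $1$'s: if the $1$'s in a row of $A$ form a block $[a,b]$, then duplicating a column located inside $[a,b]$ lengthens the block, duplicating a column to the left of $a$ shifts it, and duplicating a column to the right of $b$ leaves it unchanged, so the block is consecutive in every case. Once this combinatorial observation is in place, the rest of the argument is pure bookkeeping, and that is why I expect the basicness preservation — not the differential calculus — to be the only place where any real thought is required.
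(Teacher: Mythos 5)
Your proposal is correct and follows essentially the same route as the paper: induction on $\deg D$, with each application of $\partial/\partial z_k$ raising one exponent, realized on the matrix side as duplicating a column. You additionally spell out the check that column duplication preserves basicness (rows stay consecutive blocks of $1$'s), which the paper leaves implicit; that is a worthwhile detail but not a different argument.
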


\begin{proof}
If operator $D$ is of degree one, the right hand side of (\ref{diff})
is equal to a rational linear combination of series  $\Ze(A')$ ,
where $A'$ are matrices equal to $A$ with one  column doubled.
Then proof proceeds by induction on $\deg D$.
\end{proof}

Let $V$ be a vector space over $\mathbb{Q}$ and $\Delta\subset V^*$ be
a finite set of non-zero linear functions on $V$, which generates the dual space $V^*$. 
Denote by $G_\Delta$ the subspace of rational functions on $V$
generated by functions
$$
\frac{1}{\prod_{\alpha\in \kappa}\alpha^{n_\alpha}},
$$
where $\kappa\subset \Delta$ is a subset, which generates $V^*$.
The space $G_\Delta$ is a module over the algebra $\mathbb{Q}[V]$
 of differential operators with constant coefficients on $V$.
\begin{prop}
The $\mathbb{Q}[V]$-module $G_\Delta$ is generated by functions
$$
\frac{1}{\prod_{\alpha\in \kappa}\alpha},
$$
where $\kappa$ ranges over all subsets of $\Delta$, which are bases of $V^*$.
\label{vergne}
\end{prop}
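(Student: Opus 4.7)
My plan is a two-stage reduction: first shrink the support $\kappa$ of any generator to a basis of $V^*$ via a partial-fractions identity, then collapse multiplicities via differentiation.

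For the first stage, I take $g = 1/\prod_{\alpha\in\kappa}\alpha^{n_\alpha}$ with $\kappa$ generating $V^*$ but not a basis, so $|\kappa|>\dim V^*$. A nontrivial linear relation allows me to pick $\alpha_0\in\kappa$ with $\alpha_0 = \sum_{i\geq 1}\mu_i\alpha_i$ for $\alpha_i\in\kappa\setminus\{\alpha_0\}$. Dividing this identity by $\alpha_0^{n_0+1}\prod_{i\geq 1}\alpha_i^{n_i}$ yields
\[
\frac{1}{\alpha_0^{n_0}\prod_{i\geq 1}\alpha_i^{n_i}} \;=\; \sum_{i\geq 1}\mu_i\cdot \frac{1}{\alpha_0^{n_0+1}\,\alpha_i^{n_i-1}\prod_{j\neq i}\alpha_j^{n_j}}.
\]
Summands with $n_i>1$ retain the same support $\kappa$ but with $n_0$ up by 1 and $n_i$ down by 1; summands with $n_i=1$ have support $\kappa\setminus\{\alpha_i\}$. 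Iterating the identity, and using lexicographic order on the pair $(|\kappa|,\sum_{i\geq 1}n_i)$ (the total degree $\sum n_\alpha$ is preserved, so the process terminates), reduces every summand in finitely many steps to one whose support is a basis of $V^*$.

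For the second stage, with $\kappa=\{\beta_1,\dots,\beta_d\}$ a basis, I take $\{v_i\}\subset V$ to be the dual basis. Since $\partial_{v_i}\beta_j=\delta_{ij}$, a short induction gives
\[
\prod_i\partial_{v_i}^{n_i-1}\cdot\prod_i\frac{1}{\beta_i} \;=\; \left(\prod_i (-1)^{n_i-1}(n_i-1)!\right)\prod_i\frac{1}{\beta_i^{n_i}},
\]
so $\prod_i\beta_i^{-n_i}$ is a rational multiple of a $\mathbb{Q}[V]$-operator applied to the basic generator $\prod_i\beta_i^{-1}$, placing it in the desired submodule.

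The delicate point, and hence the main obstacle, is the first stage: one must verify that each summand produced really lies in $G_\Delta$, i.e., that its support still spans $V^*$. I will resolve this by observing that only the indices $i$ with $\mu_i\neq 0$ contribute, and for such $i$ the relation rearranges to $\alpha_i=\mu_i^{-1}(\alpha_0-\sum_{j\neq i,\,j\geq 1}\mu_j\alpha_j)$, placing $\alpha_i\in\mathrm{span}(\kappa\setminus\{\alpha_i\})$. Hence $\kappa\setminus\{\alpha_i\}$ still generates $V^*$ and each summand is a legitimate generator of $G_\Delta$, so the induction is well-defined.
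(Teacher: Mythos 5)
Your proof is correct and is essentially the standard argument that the paper delegates to the citation \cite[Lemma 2]{BV}: a partial-fraction reduction using a linear dependence in $\kappa$ to shrink the support to a basis, followed by differentiation along the dual basis to raise exponents. Your verification that each summand's support still spans $V^*$ (since $\mu_i\neq 0$ implies $\alpha_i\in\mathrm{span}(\kappa\setminus\{\alpha_i\})$) correctly closes the only delicate point, so nothing further is needed.
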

\begin{proof}
The proof is straightforward, see \cite[Lemma 2]{BV}.
\end{proof}

\begin{prop}
For a basic $d\times w$-matrix $A$  and a  differential operator  with constant coefficients $D$,  series $\Ze(A, D)$ is a rational linear combination of series $\Ze(S, D\cdot D')$ for some $D'$s, where $S$ ranges over all square submatrices of $A$ of rank $d$.
\label{square}
\end{prop}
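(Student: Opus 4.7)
The plan is to reduce the statement directly to Proposition \ref{vergne}. Let $V = \mathbb{Q}^d$ with coordinates $z_1, \dots, z_d$, and for each column index $j$ let
$$\alpha_j(z) = \sum_{i=1}^d a_{ij} z_i \in V^*$$
be the linear form determined by the $j$-th column of $A$. Since $A$ has rank $d$ and no zero columns, the set $\Delta = \{\alpha_1, \dots, \alpha_w\}$ consists of nonzero functionals that span $V^*$. The rational function whose summand defines $\Ze(A, D)$ in (\ref{diff}) is $f_A = 1/\prod_j \alpha_j$, which (after collapsing multiplicities in the denominator) lies in the space $G_\Delta$ of Proposition \ref{vergne}.

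First I would apply Proposition \ref{vergne} to write
$$f_A = \sum_\kappa D'_\kappa \left(\frac{1}{\prod_{\alpha \in \kappa} \alpha}\right),$$
where $\kappa$ ranges over those subsets of $\Delta$ which are bases of $V^*$ and the $D'_\kappa$ are constant-coefficient differential operators on $V$ (with rational coefficients). A basis $\kappa \subset \Delta$ of cardinality $d$ corresponds to a choice of $d$ linearly independent columns of $A$, i.e.\ to a square $d \times d$ submatrix $S_\kappa$ of $A$ of rank $d$.

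Next I would apply $D$ to both sides and, since $D$ and the $D'_\kappa$ are constant coefficient, use $D \circ D'_\kappa = D \cdot D'_\kappa$ to get
$$D\, f_A = \sum_\kappa (D \cdot D'_\kappa)\left(\frac{1}{\prod_{\alpha \in \kappa} \alpha}\right).$$
Evaluating at $(n_1, \dots, n_d) \in \mathbb{N}^d$ and summing, the left side becomes $\Ze(A, D)$ by (\ref{diff}), while the $\kappa$-summand becomes $\Ze(S_\kappa, D \cdot D'_\kappa)$, yielding the claimed expansion.

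The main subtlety (and the reason this is more than mere bookkeeping) is the invocation of Proposition \ref{vergne}: one has to know that a product $1/\prod \alpha_j^{n_j}$ of arbitrary powers of functionals in $\Delta$ can be reduced, modulo constant-coefficient differential operators, to products over honest bases of $V^*$ chosen from $\Delta$. Once that is granted, the rest of the argument is formal. A minor point worth checking is that the squares $S_\kappa$ that appear need not be basic in the sense of Definition \ref{def} (their rows, being rows of $A$ restricted to a subset of columns, need not remain positive $A_d$-roots), but the expression $\Ze(S_\kappa, D \cdot D'_\kappa)$ is still well-defined by formula (\ref{diff}) because the columns of $S_\kappa$ form a basis of $V^*$ and hence are nonzero functionals.
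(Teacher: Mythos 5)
Your proof is correct and follows essentially the same route as the paper: interpret the columns of $A$ as the set $\Delta$ of functionals, invoke Proposition \ref{vergne} to rewrite $1/\prod_j \alpha_j$ as a sum of constant-coefficient operators applied to $1/\prod_{\alpha\in\kappa}\alpha$ over bases $\kappa\subset\Delta$, and then compose with $D$ and substitute into (\ref{diff}). Your closing worry is unnecessary: restricting a positive root $e_{ab}$ to an increasing subset of column indices again yields an interval of $1$'s, and since $S_\kappa$ is invertible it has no zero rows or columns, so each $S_\kappa$ is in fact basic.
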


\begin{proof}
Let $\Delta$ be the set of linear functions given by columns of matrix $A$.
Restrictions imposed on this matrix by Definition \ref{def}
guarantee that conditions of Proposition \ref{vergne}
are satisfied. It follows that 
$$
\frac{1}{\prod_j(\sum_i a_{ij} z_i)} = \sum_{S}D'_{S} \frac{1}{\prod_j(\sum_i s_{ij} z_i)}
$$
for some operators  with constant coefficients $D'_S$,
where the summation is taken over all square submatrices $S=(s_{ij})$ of $A$
of rank $d$. Substituting this to  definition (\ref{diff})
we get the statement.
\end{proof}

Let $l_1, \dots,  l_k$ be non-zero linear functions on a vector space $V$
such that $\sum_i\alpha_i l_i=0$ for some numbers $\alpha_i$.
The  equality
\begin{equation}
\sum_{i=1}^k \,\frac{\alpha_i}{l_1\cdots \hat{l}_i\cdots l_k}=0
\label{OS}
\end{equation}
of rational functions on $V$ is referred to as the Orlik--Solomon relation
(\cite[3.5]{OT}).

\begin{prop}[Orlik--Solomon relations]
Let $A$  be a basic matrix  and $v_1, \dots, v_k$ be a set its distinct columns such that
$\sum_i \alpha_i v_i=0$ for some  $0\neq \alpha_i\in \mathbb{Q}$.
Then for a  differential operator  with constant coefficients $D$
\begin{equation}
\sum_{i=1}^k\alpha_i\Ze(\hat{A}_i, D)=0,
\label{osr}
\end{equation}
where $\hat{A}_i$ is  matrix $A$ with  column $v_i$ excluded.
\label{os}
\end{prop}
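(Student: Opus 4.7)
The plan is to lift the Orlik--Solomon identity (\ref{OS}) from rational functions to formal series by multiplying out and then applying $D$ termwise. View each column $v_i$ of $A$ as a linear form $v_i(z) = \sum_s v_{is}\, z_s$ on $V = \mathbb{Q}^d$, so that the hypothesis $\sum_i \alpha_i v_i = 0$ reads as a linear dependence in $V^*$. Relation (\ref{OS}) then gives
\[
\sum_{i=1}^k \frac{\alpha_i}{v_1 \cdots \hat{v}_i \cdots v_k} = 0
\]
as an identity of rational functions on $V$. Multiplying both sides by the reciprocal of the product of the linear forms coming from the columns of $A$ that are \emph{not} among $v_1, \ldots, v_k$, the $i$-th summand acquires precisely the denominator associated with $\hat A_i$; that is,
\[
\sum_{i=1}^k \frac{\alpha_i}{\prod_{j}\bigl(\sum_s (\hat A_i)_{sj}\, z_s\bigr)} = 0.
\]

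Next I apply $D$ to both sides, still a rational function identity on $V$, then evaluate at $(n_1, \ldots, n_d) \in \mathbb{N}^d$ and sum term by term in the formal sense. By definition (\ref{diff}), the left hand side becomes $\sum_{i=1}^k \alpha_i\, \Ze(\hat A_i, D)$ and the right hand side is $0$, yielding (\ref{osr}).

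The one thing to verify is that each $\hat A_i$ is itself basic, so that $\Ze(\hat A_i, D)$ is defined in the sense of Definition \ref{def}. Rank is preserved: the dependence $\sum_j \alpha_j v_j = 0$ with $\alpha_i \neq 0$ exhibits $v_i$ as a combination of the other chosen columns, so $\hat A_i$ has the same column span as $A$, of dimension $d$. Removing one column clearly leaves the remaining columns nonzero. For the rows, each row of $A$ is a positive $A_w$-root $e_{ab}$ with consecutive $1$'s, and deleting one entry yields a (re-indexed) positive $A_{w-1}$-root, with the single exceptional case being a row whose sole $1$ lies at column $v_i$. But evaluating the dependence $\sum_j \alpha_j v_j = 0$ at the corresponding coordinate of $V^*$ then forces $\alpha_i = 0$, contradicting the hypothesis. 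So each $\hat A_i$ is basic. I expect no further obstacle: the Orlik--Solomon identity does the substantive work, $D$ is handled by linearity, and this basicness check is the only non-trivial bookkeeping.
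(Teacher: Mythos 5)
Your argument is correct and is essentially the paper's proof, which simply substitutes the Orlik--Solomon identity (\ref{OS}), with the $l$'s taken to be the linear forms given by the columns of $A$, into definition (\ref{diff}) and sums termwise. Your additional check that each $\hat{A}_i$ remains basic --- in particular that deleting a column from inside a block of consecutive $1$'s still leaves a positive root, and that a row with a single $1$ at the deleted column would force $\alpha_i=0$ --- is bookkeeping the paper leaves implicit, and you handle it correctly.
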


\begin{proof}
Substituting (\ref{OS}) in  definition (\ref{diff}) we get the statement.
\end{proof}

Another family of relations among big zeta series is 
given by subdivision of the set of summands of the series into  groups
with  different total orders of the summation arguments.

Introduce some notations. Consider maps of polynomial algebras
\begin{equation}
\begin{gathered}
m_1\colon \mathbb{Q}[z_1, z_2]\ni p(z_1,z_2)\mapsto p(z_1+z_2, z_1)\in \mathbb{Q}[z_1, z_2]\\
m_2 \colon \mathbb{Q}[z_1, z_2]\ni p(z_1,z_2)\mapsto p(z_1+z_2, z_2)\in \mathbb{Q}[z_1, z_2]\\
m_3 \colon  \mathbb{Q}[z]\ni p(z)\mapsto p(z_1+z_2)\in \mathbb{Q}[z_1, z_2]
\end{gathered}
\label{mmm}
\end{equation}
Denote by
\begin{equation}
\begin{gathered}
m_1^*\colon \mathbb{Q}[\partial/\partial z_1, \partial/\partial z_2]\to \mathbb{Q}[\partial/\partial z_1, \partial/\partial z_2]\\
m_2^*\colon \mathbb{Q}[\partial/\partial z_1, \partial/\partial z_2]\to \mathbb{Q}[\partial/\partial z_1, \partial/\partial z_2]\\
m_3^*\colon \mathbb{Q}[\partial/\partial z_1, \partial/\partial z_2]\to \mathbb{Q}[\partial/\partial z]
\end{gathered}
\label{m&ms}
\end{equation}
the maps linear dual to (\ref{mmm}).

For a set of $w-$vectors $\{v_i\}$ and a $d\times w$-matrix $M$ denote by $[v_1; \dots; v_n; M]$
the matrix $M$ with rows $v_i$ added somewhere.
Recall that a big zeta value does not depend on the order of rows.
Thus for a matrix and vectors as above
the value $\Ze([v_1; \dots; v_n; M], D)$ is well defined
for a basic matrix $[v_1; \dots; v_n; M]$,
the property of being basic also does not depend
on the order of rows.

\begin{prop}[Harmonic product relations]
Let $A$ be a basic matrix containing rows $e_{ij}$ and $e_{(j+1)k}$ (see (\ref{root})).
Denote by $A'$ the matrix $A$ with these rows excluded.
Then for a differential operator with constant coefficients $D$
acting on the space of rows of  $A$
\begin{equation}
\Ze(A, D)= \Ze([e_{ik};e_{ij};A'], m_1^*D)+ \Ze([e_{ik};e_{(j+1)k};A'], m_2^*D)+\Ze([e_{ik};A'], m_3^*D),
\label{hpr}
\end{equation}
where $m^*_i$ acts trivially on the subspace generated by rows of $A'$
and acts as in (\ref{m&ms}) on the subspace generated by the added  rows.
\label{hp}
\end{prop}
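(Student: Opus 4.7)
The plan is to split the summation region for the variables $n_1, n_2$ attached to the rows $e_{ij}$ and $e_{(j+1)k}$ according to which of the three cases $n_1 > n_2$, $n_1 < n_2$, or $n_1 = n_2$ holds, and in each case to perform a stuffle-type change of summation variables that converts the partial sum into the big zeta series appearing on the right-hand side of (\ref{hpr}). The action on the differential operator $D$ is then governed by the chain rule, and it is exactly encoded by the maps $m_i^*$ of (\ref{m&ms}).

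In the region $n_1>n_2$, I would substitute $n_1=N_1+N_2$, $n_2=N_1$ with $N_1,N_2\ge 1$. A column-by-column inspection shows that the denominator becomes that of the basic matrix $[e_{ik};e_{ij};A']$, with $N_1$ the summation variable for $e_{ik}$ and $N_2$ the summation variable for $e_{ij}$; the resulting passage on rational functions of $(z_1,z_2)$ is precisely the map $m_1$ of (\ref{mmm}). Since $m_1$ is an invertible linear change of coordinates, the pullback of constant-coefficient differential operators along it is captured by its linear dual $m_1^*$, and one verifies directly that $(DF)(n_1,n_2) = ((m_1^*D)\cdot m_1 F)(N_1,N_2)$; summing over $N_1,N_2\ge 1$ yields the first term of (\ref{hpr}). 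The region $n_1<n_2$ is treated symmetrically with the substitution $n_1=N_1$, $n_2=N_1+N_2$, producing the matrix $[e_{ik};e_{(j+1)k};A']$ and the operator $m_2^*D$ by an entirely parallel chain-rule computation.

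For the diagonal $n_1=n_2=N$ one sets the single new summation variable equal to $N$; the column contributions of $e_{ij}$ and $e_{(j+1)k}$ collapse into a single row $e_{ik}$ with variable $N$, producing the matrix $[e_{ik};A']$, and the differential operator transforms via $m_3^*$, the linear dual of the degenerate substitution $m_3$. Throughout, each $m_i^*$ acts as the identity on the partials for rows of $A'$, exactly as stated, because the substitutions leave those variables untouched. The main obstacle is the diagonal case: the substitution $z_1=z_2$ is not an isomorphism of polynomial rings, so the identification of the chain-rule transformation of operators with $m_3^*$ as defined in (\ref{m&ms}) must be checked by a direct computation using the pairing between polynomials and constant-coefficient differential operators. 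Once the three substitutions and their dual effects on $D$ are verified, adding the three partial sums reproduces $\Ze(A,D)$.
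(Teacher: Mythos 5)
Your proof is correct and follows exactly the paper's own argument: the paper likewise splits the sum over the two parameters attached to $e_{ij}$ and $e_{(j+1)k}$ into the regions $n>m$, $n<m$ and $n=m$, and observes that the three pieces give the three terms of (\ref{hpr}). Your version is in fact more explicit than the paper's one-line proof, and the diagonal substitution you flag is indeed the only step where the transformation of $D$ requires a genuine verification (restriction to the diagonal, unlike the two invertible stuffle substitutions, does not commute with arbitrary constant-coefficient operators), so carrying out that computation rather than deferring it would be worthwhile.
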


\begin{proof}
Denote by $n$ and $m$ the summation parameters corresponding
to rows $e_{ij}$ and $e_{(j+1)k}$
in $\Ze([e_{ij};e_{(j+1)k};A'], D)=\Ze(A,D)$.
Divide summands of this series in three groups:  ones with $n>m$,
ones with $n<m$ and ones with $n=m$.
One may see, that they correspond to three summands of the right hand side
of   (\ref{hpr}).
\end{proof}

\section{Proof of theorem \ref{main}}
\label{proof}
Denote by $T_d$ the upper triangular $d\times d$-matrix with units
on and above the diagonal.
One may see that for any homogeneous differential operators 
with constant coefficients $D_i$ of degree  $i$, the expression
\begin{equation}
\sum_{i=1}^w\Ze(T_i, D_{w-i})
\label{form}
\end{equation}
is a rational linear combination of multiple zeta  values (\ref{mzv}) of weight $w$.

On the other hand, by Proposition \ref{square}
any multiple zeta value of weight $w$ may be written in the form (\ref{form}).
It follows that Theorem \ref{main} may be reformulated as follows.

\begin{thmbis}{main}
Any formal big zeta value of weight $w$ is a rational linear combination of the ones of the form 
$\Ze(T_d, D_{w-d})$ for some  homogeneous differential operators 
with constant coefficients $D_i$ of degree  $i$.
\end{thmbis}

\begin{proof}
The proof proceeds by induction on the depth.
For depth $1$ the statement is clear. Suppose that it is proved for depths
less than $d$.

By Proposition \ref{square}, every big zeta value may be written
as a rational linear combination of series of the form $\Ze(S, D)$,
where $S$ is a basic $d\times d$-matrix. We need to show that all
these series are linear combinations of ones of the form $\Ze(T_d, D')$ modulo
series of a lower depth.  The strategy of the proof is to  replace iteratively
$\Ze(S, D)$ with linear combinations of
big zeta series whose matrices are closer and closer to $T_d$.

To define what means "closer", introduce an order on the set of indices of a $d\times d$-matrix as follows
$$
11\,<\,21\,<\,\dots\, <\,d1 \,<\,12\,<\,\dots\,<\, d2\,<\,\dots\, <\,(d-1)d\,< \,dd
$$
We say that two matrices are equal up to place $ij$ if 
they have equal entries at places not bigger than $ij$.
We say that two matrices are equal exactly up to the place $ij$
if they are equal up to this place and are not equal up to the next bigger place.
A matrix $A$ is closer to matrix $C$ than matrix $B$ if $A$ is equal to $C$
exactly up to place $ij$, $B$ is equal to $C$
exactly up to place $i'j'$ and $ij>i'j'$.

The proof consists in the iterative application of two procedures.

\begin{proc}
{\em turns a basic matrix into a linear combination of upper triangular
basic matrices.}

Given a basic matrix, 
rearrange its rows so that the number of the first nonzero entry in the row
decreases from the top to the bottom.
Consider rows with the first non-zero entries in the row. If there are more than one such rows, 
take any two  and apply Proposition \ref{hp} to them. We get three matrices.
The one of the lower rank may be thrown away by the induction assumption.
Take two other matrices.
Rearrange their rows. The number of rows with non-zero entry at the first column
of both matrices is one less than this number of the initial matrix. Repeat this operation until the number of such rows equals one.  
Then turn to  rows with the first non-zero element at the second place.
And so on.
\end{proc}

\begin{proc}
{\em turns an upper triangular basic matrix, which is equal to $T_d$
exactly up to place $(i-1)j$, into a linear combination of basic matrices, which are equal to $T_d$
up to   place $ij$.}

Given such a matrix,
let $k$ be the number of the last non-zero entry of its $j$-th row,
"$*$" means an unknown entry:
\renewcommand{\arraystretch}{1.8}
$$
\scalebox{0.75}{
\begin{blockarray}{cccccccccccccc}
 &  & & & &$i$ & & &$j$& &$k$ & & &\\
\begin{block}{c(ccccccccccccc)}
& 1 &1 & \dots& \dots&\dots &\dots&\dots &1 &* &\dots&\dots&\dots & *\\
& 0 &1 &\dots &\dots & \dots& \dots&\dots &1 &* &\dots&\dots &\dots& *\\
&\vdots &\vdots &\vdots &\vdots &\vdots &\vdots&\vdots &\vdots &\vdots &\vdots &\vdots&\vdots&\vdots \\
&0 &\dots &0 &1 &1 &\dots &\dots & 1& *&\dots &\dots&\dots& *\\
$i$&0 &\dots &0 &0 &1 &\dots &1& 0&\dots &\dots &\dots&\dots& 0\\
&0 &\dots &0 &0 &0 &1 &* & \dots&\dots &\dots &\dots&\dots& *\\
&\vdots &\vdots &\vdots &\vdots &\vdots &\vdots&\vdots &\vdots &\vdots&\vdots &\vdots & \vdots&\vdots\\
$j$&0 &\dots &\dots & \dots&\dots & \dots&0 &1 &\dots &1& 0&\dots&0\\
&\dots &\dots &\dots &\dots &\dots &\dots & \dots& \dots&\dots& \dots&\dots&\dots &\dots \\
\end{block}
\end{blockarray}
}
$$
Apply Proposition \ref{hp} to rows $i$ and $j$.
The matrix given by the last term in (\ref{hpr}) is of depth $d-1$ and may be thrown away by the induction
assumption. 
Rearrange the rows of two other matrices so that the number of the first nonzero entry 
in the row
decreases from the top to the bottom.

 The second term of the right hand side of (\ref{hpr}) 
gives a matrix, which
is equal to $T_d$ up to place $ij$.
Thus, consider the matrix given by the first  term.

Add to this matrix a column as follows, the added column is marked with an arrow:
\begin{equation}
\scalebox{0.75}{
\begin{blockarray}{cccccccccccccccc}
 &  & & & &$\mathbf\downarrow$&$i+1$ && & $j+1$&& &$k+1$ & & &\\
\begin{block}{c(ccccccccccccccc)}
& 1 &1 & \dots& \dots& 1 &\dots&\dots&\dots &1 &* &\dots&\dots&\dots&\dots & *\\
& 0 &1 &\dots & \dots& \dots&\dots& \dots&\dots &1 &* &\dots&\dots&\dots &\dots& *\\
&\vdots &\vdots &\vdots &\vdots&\vdots &\vdots &\vdots &\vdots &\vdots &\vdots &\vdots&\vdots&\vdots &\vdots&\vdots\\
&0 &\dots &0 &1&1 &1 &\dots&\dots & 1& *&\dots&\dots &\dots&\dots& *\\
$i$&0 &\dots &\dots &0 &1&1 &\dots&\dots & 1&\dots&\dots&1 &0&\dots& 0\\
$i+1$&0 &\dots &\dots &\dots&0 &1 &\dots&1 & 0&\dots&\dots &\dots &\dots&\dots& 0\\
&0 &\dots &\dots &\dots &\dots&0 &1 &\dots& *&\dots&\dots &\dots&\dots&\dots& *\\
&\vdots &\vdots &\vdots&\vdots &\vdots &\vdots&\vdots &\vdots &\vdots&\vdots &\vdots&\vdots &\vdots & \vdots&\vdots\\
$j-1$&0 &\dots &\dots &\dots &\dots& \dots&0&1 &* &\dots &\dots&\dots& \dots&\dots&*\\
$j$&0 &\dots &\dots &\dots &\dots &\dots&\dots & \dots& 0&1& *&\dots&\dots&\dots &* \\
&\dots &\dots &\dots &\dots &\dots &\dots&\dots & \dots&\dots& \dots&\dots& \dots&\dots&\dots &\dots \\
\end{block}
\end{blockarray}
}
\label{m}
\end{equation}
(it was an ambiguity in arranging $i$-th and $(i+1)$-th rows, we choose this variant).
One may see that columns with numbers from $i$ to $j+1$ in this matrix 
are linearly dependent, the first one is expressing through others.
Applying Proposition \ref{os} we get that the big zeta series with the matrix (\ref{m})
with the $i$-th column removed (that is the initial matrix)
is a linear combination of matrices, which are (\ref{m})
without some column numbered from $i+1$ to $j+1$.

Consider these matrices. Let the $i'$-th column was removed.
Then in the resulting matrix rows with numbers  $i'$ and $i'+1$
has $0$s before $i'$s place and $1$ at the $i'$s place.
As in Procedure 1, apply Proposition \ref{hp} to these rows and
rearrange the rows of the matrix 
so that the number of the first nonzero entry in the row decreases from top to bottom.
One may see that we get a matrix, which is equal to $T_d$ up to place $ij$.
\end{proc}

To prove the statement of the theorem, firstly apply Procedure 1 to matrix $S$. It gives an upper triangular matrix.
Find the first place, where it differs from $T_d$, denote it by $ij$. Apply Procedure 2.
The result is a matrix, which is closer to $T_d$. Apply to it Procedure
1. One may see that it 
does not change the first $i$ rows and the first $j$ columns of the matrix.
It follows that it is equal to $T_d$ at least up to place $ij$.
Find the next place where it differs from $T_d$. Again apply Procedure 2 and so on.
\end{proof}

\section{Integral representation}
\label{integral}

Denote by $\M{w+3}$ the moduli space of 
$w+3$ distinct points on the complex projective line 
considered up to the
action of the M\"obius group.
This group acts freely  on sets of three distinct points.
To introduce coordinates on $\M{w+3}$ 
fix three of $w+3$ points at $0$, $1$ and $\infty$.
Coordinates $(t_1, \dots, t_w)$ of other points of the configuration 
are called {\em simplicial coordinates} on $\M{w+3}$.

\begin{prop}
The algebra of regular differential
forms on $\M{w+3}$ with logarithmic singularities at
infinity
is generated by $1$-forms 
\begin{equation}
\omega_{ij}=\frac{dt_i-dt_j} {t_i-t_j} \quad 0\le i< j\le w+1 \quad ij\neq 0w+1,
\label{omegas}
\end{equation}
where $t_i$ are simplicial coordinates, $t_0=1$ and $t_{w+1}=0$.
The only relations among  them 
are   Arnold relations:
\begin{equation}
\omega_{ij}\wedge\omega_{jk}+\omega_{jk}\wedge\omega_{ik}+\omega_{ik}\wedge\omega_{ij}=0
\label{arr}
\end{equation}
\label{ar}
\end{prop}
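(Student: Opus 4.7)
The plan is to identify $\mathcal{M}_{0,w+3}$ in simplicial coordinates with the complement $U = \mathbb{C}^w \setminus \bigcup \mathcal{A}$ of the hyperplane arrangement $\mathcal{A}$ whose hyperplanes are $\{t_i = t_j\}$ for $0 \le i < j \le w+1$, $ij \ne 0(w+1)$, with the convention $t_0 = 1$ and $t_{w+1} = 0$. Under this identification the 1-forms $\omega_{ij}$ of (\ref{omegas}) are precisely the logarithmic differentials $d\log(t_i - t_j)$ of the defining equations of the hyperplanes of $\mathcal{A}$, and hence they are regular on $\mathcal{M}_{0,w+3}$ with logarithmic poles along the boundary divisor of $\overline{\mathcal{M}}_{0,w+3}$.

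The main technical input is the theorem of Brieskorn and Orlik--Solomon: for any affine hyperplane arrangement $\mathcal{A} \subset \mathbb{C}^n$ with complement $U$, the graded $\mathbb{Q}$-algebra of algebraic differential forms on $U$ with at most logarithmic poles along $\mathcal{A}$ is generated by the dlog forms of the hyperplane equations, and the ideal of relations is generated by one Orlik--Solomon relation per minimal circuit of the matroid of $\mathcal{A}$. I would invoke this theorem, after checking that the algebra of log forms on $\mathcal{M}_{0,w+3}$ with respect to the Deligne--Mumford boundary agrees with the algebra of log forms on $U$ with respect to $\mathcal{A}$; this reduces to the fact that the boundary divisors of $\overline{\mathcal{M}}_{0,w+3}$ visible in the simplicial chart are exactly the hyperplanes of $\mathcal{A}$.

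It then remains to identify the circuits and their relations. Each hyperplane $\{t_i = t_j\}$ is labelled by an edge $ij$ of the graph $K$ obtained from the complete graph on $\{0, 1, \ldots, w+1\}$ by removing the edge $\{0, w+1\}$; the matroid of $\mathcal{A}$ is the graphic matroid of $K$, so circuits are cycles of $K$. For a triangle $\{i, j, k\}$ the Orlik--Solomon relation, coming from $(t_i - t_j) - (t_i - t_k) + (t_j - t_k) = 0$, is exactly the Arnold relation (\ref{arr}). A standard triangulation argument for graphic Orlik--Solomon algebras shows that the relation of any longer cycle is a $\mathbb{Q}$-linear combination of triangle relations. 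The main obstacle is to control cycles that would traverse the missing edge $\{0, w+1\}$: here one formally adjoins $\omega_{0, w+1} := d\log(t_0 - t_{w+1}) = d\log 1 = 0$, which makes every would-be Arnold relation involving this symbol collapse consistently to a genuine relation among the remaining $\omega_{ij}$. With this reduction the ideal generated by all Arnold relations coincides with the Orlik--Solomon ideal, and the proposition follows.
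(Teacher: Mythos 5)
The paper offers no proof of this proposition beyond the citation ``see \cite[6.1]{B}'', so your argument is not competing with an in-text proof; what you have written is essentially the standard argument that the cited reference encapsulates, and it is correct in outline. You identify $\M{w+3}$ in simplicial coordinates with the complement of the affine arrangement $\{t_i=t_j\}$, $0\le i<j\le w+1$, $ij\ne 0(w+1)$ (i.e.\ $t_i=t_j$, $t_i=0$, $t_i=1$), invoke Brieskorn/Orlik--Solomon to present the algebra of logarithmic forms by the $\dlog$ generators and the circuit relations of the graphic matroid, and reduce long cycles to triangles. Two points deserve emphasis. First, your handling of the missing edge $\{0,w+1\}$ is not a side remark but a genuine necessity: the hyperplanes $t_i=0$ and $t_i=1$ are \emph{parallel}, so the pair $\{\omega_{0i},\omega_{i,w+1}\}$ is a minimal dependent set with empty intersection, and the relation $\omega_{0i}\wedge\omega_{i,w+1}=0$ is \emph{not} literally of the form (\ref{arr}); it only becomes an ``Arnold relation'' after adjoining the formal symbol $\omega_{0,w+1}=\dlog 1=0$ as you do (already for $w=1$ this degenerate relation is the only one, and without it the statement is false as written). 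So your proof in fact makes precise a convention that the proposition glosses over. Second, the phrase ``log poles along $\mathcal{A}$'' should be ``log poles with respect to a normal crossings compactification of $U$''; the agreement with the Deligne--Mumford notion of ``logarithmic at infinity'' is not because the boundary divisors visible in the chart are the hyperplanes (most boundary divisors of $\Ms{w+3}$ are not hyperplanes), but because the space of global logarithmic $p$-forms is a birational invariant of the pair, equal here to the weight-$2p$ part $F^p\cap W_{2p}H^p(U)$, which for an arrangement complement is all of $H^p(U)$ and hence spanned by products of $\dlog$'s by Brieskorn's theorem. With those two clarifications your proposal is a complete and correct proof.
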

\begin{proof}
See e.~g. \cite[6.1]{B}.
\end{proof}

Introduce cubical coordinates:
\begin{equation}
x_1=t_1 \quad x_2=t_2/t_1\quad \dots \quad x_w=t_w/t_{w-1}
\label{coord}
\end{equation}
The standard simplex 
\begin{equation}
\Delta_{w}=\{1>t_1>\dots>t_w>0,\quad t_i\in \mathbb{R} \}
\end{equation}
in cubical coordinates turns into the cube
\begin{equation}
\square=\{0<x_i<1,\quad x_i\in \mathbb{R}\}
\label{cube}
\end{equation}

Let $A=(a_{ij})$ be a basic $d\times w$-matrix such that the series $\Ze(A)$
converges, see Proposition \ref{conv}.
Integrating the Taylor series of the integrand term by term we get  a
relation 
\begin{equation}
\Ze(A)=\int_\square \left(\prod_{j=1}^d \frac{\prod_k x_k^{a_{jk}}}{1-\prod_k x_k^{a_{jk}}} \cdot \prod_{i=1}^{w} \frac{dx_i}{x_i}\right)
\label{int}
\end{equation}
where the integration region is given by (\ref{cube}).

\begin{prop}
The integrand of (\ref{int}) is a regular differential
form on $\M{w+3}$ with logarithmic singularities at
infinity.
\label{reg}
\end{prop}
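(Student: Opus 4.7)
The plan is to pass to simplicial coordinates, obtain an explicit rational expression for the integrand, and then recognize it as a polynomial in the Arnold generators $\omega_{ij}$ of Proposition \ref{ar}; by that proposition, such an expression automatically certifies that the integrand is a regular log form on $\Ms{w+3}$.

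First I would apply the substitution $x_k = t_k/t_{k-1}$ (with $t_0 = 1$, $t_{w+1} = 0$) and carry out two telescoping computations. Since each row of $A$ is a positive $A_w$-root $e_{a_j b_j}$, the product of powers collapses as
\[
\prod_{k=1}^w x_k^{a_{jk}} = \prod_{k=a_j}^{b_j} \frac{t_k}{t_{k-1}} = \frac{t_{b_j}}{t_{a_j-1}},
\]
so the $j$th geometric-series factor becomes $t_{b_j}/(t_{a_j-1} - t_{b_j})$. A short induction using $dx_i/x_i = dt_i/t_i - dt_{i-1}/t_{i-1}$ (with $dt_0 = 0$) then gives
\[
\bigwedge_{i=1}^w \frac{dx_i}{x_i} = \frac{dt_1 \wedge \cdots \wedge dt_w}{t_1 t_2 \cdots t_w},
\]
so the integrand takes the explicit form
\[
F = \frac{\prod_{j=1}^d t_{b_j}}{t_1 \cdots t_w \cdot \prod_{j=1}^d (t_{a_j-1} - t_{b_j})}\, dt_1 \wedge \cdots \wedge dt_w.
\]

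Next I would observe that every linear factor in the denominator has the shape $t_i - t_j$ for some $0 \le i < j \le w+1$ with $(i,j) \ne (0, w+1)$, and hence is the denominator of one of the generators $\omega_{ij}$. Since the rows of a basic matrix are distinct positive roots, the pairs $(a_j - 1, b_j)$ are pairwise distinct, so each such factor appears with multiplicity one; together with the cancellation of $\prod_j t_{b_j}$ against the $t_i$-factors in the denominator, this makes $F$ manifestly regular on the open stratum $\M{w+3}$, with poles only along boundary divisors.

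Finally I would exhibit $F$ as a polynomial in the $\omega_{ij}$'s. When the endpoints $b_j$ are pairwise distinct, the single wedge
\[
\pm \bigwedge_{i \in \{1, \dots, w\} \setminus \{b_j\}} \omega_{i, w+1} \; \wedge \; \bigwedge_{j=1}^d \omega_{a_j - 1,\, b_j}
\]
will reproduce $F$: in each $\omega_{a_j-1, b_j} = (dt_{a_j-1} - dt_{b_j})/(t_{a_j-1} - t_{b_j})$ the $dt_{a_j-1}$-branch vanishes either because $dt_0 = 0$ (when $a_j = 1$) or because $dt_{a_j-1}$ is already present in another factor of the wedge (either in the corresponding $\omega_{a_j-1, w+1}$ if $a_j - 1 \notin \{b_{j'}\}$, or else in the factor $\omega_{a_{j'}-1,\,b_{j'}}$ with $b_{j'} = a_j - 1$), leaving only the $-dt_{b_j}$-branch. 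The main obstacle will be the degenerate case in which several rows share an endpoint $b_j$: there this single wedge cannot fit $w$ distinct $dt$-slots and one must write $F$ as a sum of wedges via a partial-fraction decomposition of the factor $\prod_{j : b_j = b} 1/(t_{a_j-1} - t_b)$, using the Arnold relation (\ref{arr}) to absorb surviving numerator factors. This combinatorial bookkeeping is the only step that will require real attention; everything else reduces to the telescoping calculations above.
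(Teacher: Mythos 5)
Your reduction to simplicial coordinates is correct and coincides with the paper's: the telescoping computation gives exactly the form (\ref{dform}), with $l(j)=a_j-1$ and $r(j)=b_j$ in your notation, and the strategy of certifying logarithmic singularities by exhibiting the integrand as a polynomial in the generators (\ref{omegas}) is the right one. Your treatment of the case of pairwise distinct right endpoints $b_j$ is also sound, including the recursive argument that kills the $dt_{a_j-1}$-branch of each factor $\omega_{a_j-1,b_j}$.

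The gap is the case you defer to "combinatorial bookkeeping": it is in fact the main case, not a degenerate one. For the matrix of Example \ref{e1} every row ends at the last column, so all the $b_j$ coincide; your explicit wedge therefore covers none of the multiple zeta values. Moreover, the fix you sketch does not work as stated. If $k\ge 2$ rows share the endpoint $b$, the coefficient of $F$ carries $t_b^{k}$ in the numerator against a single $t_b$ in $t_1\cdots t_w$, and a partial-fraction decomposition of $\prod_{j:\,b_j=b}(t_{a_j-1}-t_b)^{-1}$ in the variable $t_b$ only rearranges the denominator — it leaves the excess power $t_b^{k-1}$ in the numerator untouched, and a product of forms (\ref{omegas}) cannot carry such a numerator. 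The issue is the numerator, not the denominator, and the relevant identity is $\frac{t_b}{t_{a-1}-t_b}=\frac{t_{a-1}}{t_{a-1}-t_b}-1$, which must be applied coherently so that after expanding all products each variable $t_m$ occurs at most once in the numerator. The paper organizes exactly this by encoding the rows as edges of a graph on $\{0,1,\dots,w\}$ (linear independence of the rows forbids cycles, so it is a forest), rooting each tree at its minimal vertex and orienting edges away from the root: every vertex then has at most one incoming edge, so rewriting each factor with $t_{\mathrm{target}}$ in the numerator (at the cost of an additive constant on the "wrong" edges) makes all numerator variables distinct, and each summand of the expansion becomes a product of forms (\ref{omegas}). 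You would need to supply some such device; as written, your argument establishes the proposition only for matrices whose rows have pairwise distinct right endpoints.
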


\begin{proof}
Denote by $l(i)+1$ and $r(i)$ the numbers of the first and the
last unit in the $i$-th row of matrix $A$.
Substituting (\ref{coord}) into the integrand of (\ref{int})
we get the differential form
\begin{equation}
\prod_{j=1}^{d}\frac{t_{r(j)}}{t_{l(j)}-t_{r(j)}}\cdot\prod_{i=1}^{w}\frac{dt_i}{t_i}
\label{dform}
\end{equation}
where $t_0=1$.

Let us prove that this is a linear combination of products of differential forms (\ref{omegas}).

Denote by $G$ an unoriented graph with vertices $V(G)=\{0,1,\dots, w\}$,
with edges $E(G)$ corresponding to rows of $A$,
vertices $i-1$ and $j$ are connected by an edge iff $A$ contains  row $e_{ij}$
(see (\ref{root})). $G$ has no cycles, otherwise rows corresponding to edges
forming the cycle would be linear dependent. Thus, $G$ is a disjoint union of trees.
Make these trees rooted, choosing the vertex labeled by the minimal number
as a root.  Equip $G$ with an orientation by orienting any edge into direction "from the root"
of the tree to which this edge belongs. Call an edge "right" if its source's label 
is less than its target's one, otherwise call it  "wrong".

Rewrite (\ref{dform}) as
\begin{equation}
\prod_{\substack{(ij)\in E(G), \\(ij) \mbox{ \small is right}}} \frac{t_j}{t_i-t_j}
\cdot\prod_{\substack{(ij)\in E(G), \\(ij) \mbox{ \small is wrong}}} \left( \frac{t_i}{t_i-t_j}-1\right)\cdot\prod_{i=1}^{w}\frac{dt_i}{t_i}
\end{equation} 
where $(ij)$ connects vertices $i$ and $j$, $j>i$.

Expand brackets. Every vertex of $G$ has at most one incoming edge.
It follows that each of the summands has in the numerator
at most one $t_i$ for any $i$,
which cancels with the same term in the denominator.
Thus, each summand is 
proportional to a product of differential forms (\ref{omegas}).
\end{proof}

We left to the reader to check that for matrix satisfying conditions
of Proposition \ref{conv} the differential form given by this proposition is convergent on
the standard simplex,  convergence conditions are described in \cite{B, BCS}.

\begin{prop}
Differential forms appearing as integrands in (\ref{int}) 
for any, not only convergent basic matrix $A$, generate the space of regular differential
forms of top degree on  $\M{w+3}$ with logarithmic singularities at
infinity.
\end{prop}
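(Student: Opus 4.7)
The plan is to establish the two inclusions separately, combining a mild generalization of Proposition \ref{reg} with Proposition \ref{vergne}.

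For the inclusion that every integrand of (\ref{int}) lies in the space of regular top-degree log forms, it suffices to extend Proposition \ref{reg} to arbitrary basic matrices. The argument given there --- translating (\ref{int}) to simplicial coordinates as (\ref{dform}) and expanding via the forest structure on $\{0,1,\dots,w\}$ --- is purely algebraic: it never uses that $\Ze(A)$ converges, only that the rows of $A$ are positive roots and that $A$ is basic in the sense of Definition \ref{def}. The same computation therefore places the integrand of any basic $A$ in the Orlik--Solomon algebra of Proposition \ref{ar}.

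For the reverse inclusion I would apply Proposition \ref{vergne}. Take $V$ to be the space of simplicial coordinates $(t_1,\dots,t_w)$ and $\Delta$ the set of (affine-linear) functions $t_i-t_j$ for $0 \le i < j \le w+1$, $(i,j)\ne(0,w+1)$, with the conventions $t_0=1$ and $t_{w+1}=0$. A top-degree regular log form equals $f\cdot dt_1\wedge\cdots\wedge dt_w$ with $f$ in the top-degree piece of the space $G_\Delta$ of Proposition \ref{vergne}; this piece is $\mathbb{Q}$-spanned, without invoking the differential-operator action, by the basis denominators $\prod_{\alpha\in\kappa} 1/\alpha$, where $\kappa\subset\Delta$ runs over bases of $V^*$. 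The final step matches each basis $\kappa$ to a basic matrix. Such a $\kappa$ determines a spanning tree $T$ on $\{0,1,\dots,w+1\}$ whose edges avoid $(0,w+1)$. Splitting $T$ into its restriction $F$ to $\{0,1,\dots,w\}$ and its remaining edges incident to $w+1$ yields a forest $F$, and the edge-to-row dictionary $(l,r)\leftrightarrow e_{l+1,r}$ from the proof of Proposition \ref{reg} turns $F$ into a basic matrix $A$. The edges of $T$ incident to $w+1$ correspond to the automatic $dt_i/t_i$ factors in every integrand of (\ref{int}), so the ``no zero column'' condition on $A$ reduces to $T$ being spanning.

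The main obstacle will be verifying that the integrand (\ref{dform}) for this $A$ has a ``leading term'' equal, up to sign, to $\bigl(\prod_{\alpha\in\kappa}1/\alpha\bigr)\cdot dt_1\wedge\cdots\wedge dt_w$, with the remaining terms corresponding only to strictly simpler forests (shorter depth, or broken-circuit-coarser $F$). This is done by systematically expanding each rational factor via $t_{r(j)}/(t_{l(j)}-t_{r(j)})=t_{l(j)}/(t_{l(j)}-t_{r(j)})-1$ and tracking which basis denominators are produced. Induction on a well-founded order on basic matrices --- depth, or a broken-circuit order on the associated forest --- then propagates the spanning result provided by Proposition \ref{vergne} and completes the proof; if residual constant-coefficient differential operators appear along the way, they can be absorbed into the basic matrix by a column-doubling operation as in the proof of Proposition 3.
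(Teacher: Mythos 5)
Your proof follows the same route as the paper's own: reduce via Proposition \ref{ar} to monomials $\bigwedge_k\omega_{i(k)j(k)}$, assign to such a monomial the matrix whose rows are the roots $e_{(i(k)+1)j(k)}$ coming from the factors with $j(k)\neq w+1$, observe that the integrand of (\ref{int}) for that matrix reproduces the monomial up to terms with more factors of the form $\dlog t_i$, and close by induction. The ``main obstacle'' you flag is handled in the paper exactly by the expansion $t_r/(t_l-t_r)=t_l/(t_l-t_r)-1$ from the proof of Proposition \ref{reg}, and your induction on depth is the paper's decreasing induction on the number of factors $\omega_{i(w+1)}$; that part of the plan is sound. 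The detour through Proposition \ref{vergne} is unnecessary and not literally licensed --- the functions $t_i-t_j$ with $t_0=1$, $t_{w+1}=0$ are affine rather than linear, and in any case the coefficient of an Arnold monomial is already a single product $\prod_k(t_{i(k)}-t_{j(k)})^{-1}$ over an independent set, which is precisely where the paper starts.

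The genuine gap is the sentence ``the `no zero column' condition on $A$ reduces to $T$ being spanning.'' That is false. For $w=2$ take the monomial $\omega_{12}\wedge\omega_{13}=\dlog(t_1-t_2)\wedge\dlog t_1$: the associated edge set is a spanning tree once $0$ and $w+1=3$ are identified, yet the associated matrix is the single row $e_{22}=(0,1)$, whose first column vanishes, so it is not basic in the sense of Definition \ref{def}. The failure is not cosmetic: for $w=2$ there are exactly four basic matrices, and, because of the Orlik--Solomon relation at $t_1=t_2=1$, their four integrands span only a three-dimensional subspace of the six-dimensional space of top-degree log forms on $\M{5}$; in particular $\dlog(t_1-t_2)\wedge\dlog t_1$ and $\dlog t_1\wedge\dlog t_2$ lie outside it. So the reduction can only work if matrices with zero columns (including the empty matrix, whose integrand is $\prod_i dx_i/x_i$) are admitted alongside the basic ones, and the statement must be read with that enlargement. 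The paper's own proof contains the identical lacuna --- it checks only linear independence of the rows before asserting ``the matrix is basic'' --- so you have reproduced rather than introduced the problem; but your explicit equivalence with spanning-ness is wrong as stated and is the one step that has to be repaired, by enlarging the class of matrices rather than by trying to show (which the dimension count rules out) that zero-column integrands lie in the span of the basic ones.
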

\begin{proof}
Let
\begin{equation}
\bigwedge_{k=1}^{w} \omega_{i(k)j(k)}=\bigwedge_{k=1}^{w} \dlog(t_{i(k)}-t_{j(k)})
\label{factor}
\end{equation} 
 be a top degree form on  $\M{w+3}$ with logarithmic singularities at
infinity. By Proposition \ref{ar}, these forms generate
the space of top degree forms  with logarithmic singularities at
infinity. Consider the subset of indexes $k$ for which $j(k)\neq w+1$. Build a matrix with rows equal to roots
$e_{(i(k)+1)j(k)}$ for this set of indexes. These rows are linearly
independent, otherwise functions $(t_{i(k)}-t_{j(k)})$ would be 
linearly dependent, what would follow vanishing of the product of their
$\dlog$'s, this is  a consequence of the Arnlod relations (\ref{arr}).
Thus, the matrix is basic.

Consider the differential form, which is the integrand of (\ref{int})
associated with this matrix. By the proof of Proposition \ref{reg},
these form is equal to the initial one up to some 
differential forms of type (\ref{factor})  with a bigger number
factors $\omega_{i(w+1)}$. Applying  decreasing induction
on the number of such factors we get the statement.
\end{proof}

It follows that the space of convergent big zeta value of weight $w$ is the space of periods
of the pair $(\Md{w+3}, \Md{w+3}\setminus\M{w+3})$, notation $\Md{w+3}$
is introduced in \cite{B}.
Theorem \ref{main} states that this space is generated
by multiple zeta values of weight $w$.
It is in good agreement with the fact that all periods of these pair are given by periods of the fundamental group
of a projective line without three points (\cite{Del}),
that is multiple zeta values, what is proved in \cite{B}.
In contrast with this proof, which states that the weights of multiple
zeta values are not bigger than $w$, we get multiple zeta values exactly of weight  $w$.
It is natural because in (\ref{int}) we integrate a differential form 
with logarithmic poles.

\bibliographystyle{alpha}
\bibliography{bigzeta}

\end{document}